\documentclass[reqno]{amsart}
\usepackage{amsmath, amsthm, amssymb, amstext, mathrsfs}

\usepackage[left=3.4cm,right=3.4cm,top=3cm,bottom=3cm]{geometry}
\usepackage{hyperref,xcolor}
\hypersetup{
 pdfborder={0 0 0},
 colorlinks,
}
\usepackage{enumitem}
\setlength{\parindent}{1.2em}
\allowdisplaybreaks

\newtheorem{theorem}{Theorem}

\newtheorem{lemma}[theorem]{Lemma}

\newtheorem{definition}[theorem]{Definition}



              %
              %
              %
              %
              %
              %
            %
          %
          %
         %
         %
         %
         %
         %
         %
         %

\numberwithin{theorem}{section}
\numberwithin{equation}{section}


\title[Fractional partial differential variational inequality]{Fractional partial differential variational inequality}

\author[Jinxia Cen]{Jinxia Cen}
\address[Jinxia Cen]{School of Mathematical Sciences, Zhejiang Normal University, Jinhua 321004, P. R. China}
\email{jinxcen@163.com}

\author[J. Vanterler da C. Sousa]{J. Vanterler da C. Sousa$^{*}$}
\address[J. Vanterler da C. Sousa]{Aerospace Engineering, PPGEA-UEMA, Department of Mathematics, DEMATI-UEMA, Sao Lu\'is, MA 65054, Brazil.}
\email{vanterler@ime.unicamp.br}
\thanks{$^{*}$Corresponding author}

\author[Wei Wu]{Wei Wu}
\address[Wei Wu]{Center for Applied Mathematics of Guangxi, and Guangxi Colleges and Universities Key Laboratory of Complex System Optimization and Big Data Processing, Yulin Normal University, Yulin 537000, Guangxi, P.R. China}
\email{wwu1985yl@163.com}

\subjclass[2010]{47J20, 35R11, 49J40, 35J88, 26A33}
\keywords{Fractional partial differential variational inequality, Nonlocal boundary condition, Mittag-Leffler functions, Existence, Measure of noncompactness}
\begin{document}

\begin{abstract}
In this present paper, we introduce and study a dynamical systems involving fractional derivative operator and nonlocal condition, which is constituted of a fractional evolution equation and a time-dependent variational inequality, and is named as fractional partial differential variational inequality (FPDVI, for short).
By employing the estimates involving the one-and two-parameter Mittag-Leffler functions, fixed-point theory for set-value mappings, and non-compactness measure theory, we develop a general framework to establish the existence of  smooth solutions to (FPDVI).
\end{abstract}

\maketitle

\bigskip
	
\section{Introduction}

Let $\Omega_{1}, \Omega_{2}$ be two separable and reflexive Banach spaces, and $K$ be a nonempty, closed and convex subset of $\Omega_{1}$. In addition, $\mathcal{A}\colon \Omega_{2} \rightarrow \Omega_{2}$ is a positive sectorial operator of the Mittag-Leffler family $\mathbb{E}_{\alpha}(\xi^{\alpha} \mathcal{A})$ in $\Omega_{2}$ with $\xi\geq 0$.
Let $J$ be a finite interval (i.e., $J=[a,b]$) or infinite interval (i.e., $J=[a,+\infty)$) of the real line $\mathbb{R}$ and $\alpha>0$. The Riemann-Liouville fractional integral (left-sided and right-sided) of a function $\phi$ on $J$ is defined by \cite{J1,frac2}
\begin{equation}\label{eq111}
\mathcal{I}_{a+}^{\alpha }\phi\left( \xi\right) =\frac{1}{\Gamma \left( \alpha \right) }\int_{a}^{\xi}\left(\xi -s \right) ^{\alpha -1}\phi\left( s\right) ds.
\end{equation}
On the other hand, let $n-1< \alpha \leq n$, with $n \in \mathbb{N}$  and $\phi\in \mathcal{C}^n(J,\mathbb{R})$. The Caputo fractional partial derivative denoted by ${^{\rm c}\mathfrak{D}}^{\alpha}_{a+}(\cdot)$ of a function $f$ of order $\alpha$, is defined by \cite{J1,frac2}
\begin{equation}\label{eq333}
{^{\rm c}\mathfrak{D}}^{\alpha}_{a+}\phi(\xi):= \frac{\partial^{\alpha} \phi(\xi)}{\partial t^{\alpha}} =
\mathcal{I}_{a+}^{n-\alpha} \left(\frac{\partial^{\alpha} \phi(\xi)}{\partial t^{\alpha}} \right),
\end{equation}
where $\mathcal{I}_{a+}^{\alpha}(\cdot)$ is given  {\rm Eq.(\ref{eq111})}. A natural consequence of the definition (\ref{eq333}), is that in the limit of $\alpha\rightarrow n$, we have the classical derivative (integer order), given by ${^{\rm c}\mathfrak{D}}^{n}_{a+}\phi(\xi):= \dfrac{\partial^{n} \phi(\xi,t)}{\partial t^{n}}$.

Variational inequalities as useful models have been widely applied to solve the problems with convex energy functions \cite{11,21,32}. An essential generalization of variational inequalities are hemivariational inequalities, which were mastered by Panagiotopoulos \cite{49} in the early 1980s and are mainly connected to engineering applications involving non-monotone and possibly multivalued constitutive and interface laws (see \cite{16,22,55} and the references therein). A wide range of physical phenomena leads to variational and hemivariational inequalities, constituting one of the most promising branches of pure mathematics and applied mathematics. This is the reason why inequality problems in Mechanics could be divided into two main categories: that of variational inequalities, which is mainly concerned with convex functions, and that of hemivariational inequalities, which is concerned with locally non-convex Lipschitz functions.

Recently, the notations of differential variational inequalities and differential hemivariational inequalities were introduced by Pang~\cite{19} and  Zeng~\cite{55}, respectively, for solving various problems in real-life which are modeled by inequalities with constraints that are (partial) differential equations. After that, more and more scholars are attracted to develop the theory, applications and numerical methods for differential variational/hemivariational inequalities \cite{Cen,Liu1,Liu2,Migorski1,Migorski2}. For example, in 2013, Gwimer \cite{5}, investigated the stability of a new class of differential variational inequalities; in 2017, Li et al. \cite{13}, explored the properties of solution set for a systems of nonlinear evolutionary partial differential equations. On the other hand, the theory of fractional operators is well established with numerous results of great relevance and impact in several areas, especially when it involves applications, of which we mention some of them, namely, Engineering, Physics, Medicine, among others \cite{aplica1,aplica3,aplica4} and \cite{J1,novo3,frac2,J2}. During these last years, new fractional operators have appeared, which has contributed a lot to the growth of the area. Here, it should be mentioned that despite, in the present work, we restrict ourselves to Caputo fractional operator, but the results established in the current paper could be extended to different fractional operators.

Because of the important of both differential variational/hemivariational inequalities and fractional calculus, in 2019, Mig\'orski and Zeng \cite{Migorski3}, discussed a fractional partial evolution variational system consisting of a mixed quasi-variational inequality combined with a fractional partial differential equation in the Caputo sense. In \cite{Migorski3}, the authors used pseudo-monotonicity of multivalued operators, a generalization of the Knaster-Kuratowski-Mazurkiewicz theorem, operator semigroup theory and the Bohnenblust-Karlin fixed-point principle to discuss the existence of mild solutions for the system. Since then, some experts move their attention to the study of fractional differential variational inequalities, see \cite{Migorski4,Jiang} and the references therein.

Furthermore, we can highlight the interesting paper on the good placement of a generalized vector variational inequality problem in the structure of topological vector spaces investigated by Kumar and Gupta \cite{para1}.

In 2020 Papageorgiou et al. \cite{para2}, discussed a nonlinear control system involving a maximal monotone map and with a priori feedback. The authors assume that the multi-functional control constraint $U(t,x)$ has non-convex value and only lsc in the variable $x$. Using the $Q$ regularization of $U(t,\cdot)$, the authors introduce a relaxed system and discuss a result that states that any state of the relaxed system can be approximated to the appropriate norm with any degree of accuracy, by a state of the original system. The present work presents interesting results that can be connected with fractional operators, in particular, with the one presented in this paper. On the other hand, we can highlight the interesting work carried out by Cortez et al. \cite{para3}, using fractional integrals with locals and the Mittag-Leffler Kernel to address inequalities of the Hermite-Hadamard type.

Motivated by the works above, in particular by the \cite{principal} work, in this paper, we consider the following fractional partial differential variational inequality with a nonlocal boundary condition given by
\begin{eqnarray}\label{principal}
\left\{
 \begin{array}{crl}
 {^{\rm c}\mathfrak{D}}^{\alpha}_{0+} \theta(\xi)&=& \mathcal{A}\theta(\xi) +\mathcal{B}(\xi,\theta(\xi))u(\xi)+f(\xi,\theta(\xi)), \,\, a.e.\,\,\xi\in [0,T],\\
u(\xi) &\in& SOL(K,g(\xi,\theta(\xi))+G(\cdot),\phi),\,\,a.e.\,\,\xi\in [0,T],\\
\theta(0)&=&h(\theta),
\end{array}
\right.
\end{eqnarray}
where ${^{\rm c}\mathfrak{D}}^{\alpha}_{0+}(\cdot)$ is the Caputo fractional derivative of order $0<\alpha<1$,  $\mathcal{B}:[0,T]\times \Omega_{2} \rightarrow \mathscr{L}(\Omega_{1}, \Omega_{2})$; $f:[0,T]\times \Omega_{2} \rightarrow \Omega_{2}$; $G: K\rightarrow \Omega_{1}^{*}$; $\phi: K\rightarrow \overline{\mathbb{R}}:= \mathbb{R}\cup \left\{+\infty \right\}$ and $h:C([0,T], \Omega_{2})\rightarrow \Omega_{2}$, are given maps, which will be specified in the sequel. Furthermore, $\Omega_{1}^{*}$ is dual space of $\Omega_{1}$ and $SOL(K,g(\xi,\theta(\xi)),G(\cdot),\phi)$ is the solution set of the following generalized mixed variational inequality in $\Omega_{1}$: given $\xi\in[0,T]$ find $u: [0,T]\rightarrow K$ such that
\begin{equation*}
    \left< g(\xi,\theta(\xi))+G(u(\xi)), v-u(\xi) \right>+ \phi(v)- \phi(u(\xi))\geq 0
\end{equation*}
$\forall v\in K$, a.e. $\xi\in [0,T]$.

The mild solution of {\rm(\ref{principal})} is formulated by the following way, see \cite{mild1,Bazhlekova,Cuesta}.

\begin{definition}\label{Definition1.1} A pair of functions $(\theta,u)$ with $\theta\in C([0,T],\Omega_{2})$ and $u:[0,T]\rightarrow K(\subseteq \Omega_{1})$ measurable, is said to be a mild solution of {\rm(\ref{principal})} if all $\xi\in[0,T]$ the equality holds
\begin{eqnarray}\label{1.3}
    \theta(\xi)= \mathbb{E}_{\alpha} (\xi^{\alpha} \mathcal{A}) h(\theta)+\int_{0}^{\xi} (\xi-s)^{\alpha-1} \mathbb{E}_{\alpha,\alpha} ((\xi-s)^{\alpha} \mathcal{A}) (\mathcal{B}(s,\theta(s))u(s)+f(s,\theta(s)))ds,
\end{eqnarray}
where $u(\xi)\in SOL(K,g(\xi,\theta(\xi))+G(\cdot), \phi)$ a.e., $\xi\in [0,T]$. If $(\theta,u)$ is a mild solution of the problem {\rm(\ref{principal})}, then $\theta$ is called to be the mild trajectory and $u$ is the variational control trajectory. Furthermore, we have that $\mathbb{E}_{\alpha}(\cdot)$ and $\mathbb{E}_{\alpha,\alpha}(\cdot)$, are the Mittag-Leffler functions of one and two parameters, respectively.
\end{definition}

The main contribution of this present paper is to discuss the existence of smooth solutions for the fractional partial differential variational inequality (\ref{principal}) via using the estimates involving the Mittag-Leffler functions of one and two parameters, fixed point theory of multivalued mapping, and non-compactness measure theory.

Moreover, to highlight the level of generalization of our problem (\ref{principal}), we present below several its particular cases.
\begin{enumerate}

\item  If $\alpha=1$, then (\ref{principal}) reduces to the following differential variational inequality
    \begin{eqnarray*}
\left\{
 \begin{array}{crl}
 \theta'(\xi)&=& \mathcal{A}\theta(\xi) +\mathcal{B}(\xi,\theta(\xi))u(\xi)+f(\xi,\theta(\xi)), \,\, a.e.\,\,\xi\in [0,T],\\
u(\xi) &\in& SOL(K,g(\xi,\theta(\xi))+G(\cdot), \phi),\,\,a.e.\,\,\xi\in [0,T],\\
\theta(0)&=&h(\theta),
\end{array}
\right.
\end{eqnarray*}
which has been explored by Liu et al. \cite{principal} .

\item  If $\Omega_{1}=\mathbb{R}^{m}$, $\Omega_{2}=\mathbb{R}^{n}$, $f=0$, $\phi=0$ and $G: K \rightarrow \mathbb{R}^{m}$, then (\ref{principal}) reduces to the following fractional differential variational inequality in finite-dimensional spaces
    \begin{eqnarray*}
\left\{
 \begin{array}{crl}
 {^{\rm c}\mathfrak{D}}^{\alpha}_{0+} \theta(\xi)&=& \mathcal{A}\theta(\xi) +\mathcal{B}(\xi,\theta(\xi))u(\xi), \,\, a.e.\,\,\xi\in [0,T],\\
u(\xi) &\in& SOL(K,g(\xi,\theta(\xi))+G(\cdot),\phi),\,\,a.e.\,\,\xi\in [0,T],\\
\theta(0)&=&h(\theta).
\end{array}
\right.
\end{eqnarray*}

 \item  If $\alpha=1$, $\Omega_{1}=\mathbb{R}^{m}$, $\Omega_{2}=\mathbb{R}^{n}$, $f=0$, $\phi=0$ and $G: K \rightarrow \mathbb{R}^{m}$, then (\ref{principal}) reduces to the following differential variational inequality in finite-dimensional spaces
    \begin{eqnarray*}
\left\{
 \begin{array}{crl}
 \theta'(\xi)&=& \mathcal{A}\theta(\xi) +\mathcal{B}(\xi,\theta(\xi))u(\xi), \,\, a.e.\,\,\xi\in [0,T],\\
u(\xi) &\in& SOL(K,g(\xi,\theta(\xi))+G(\cdot)),\,\,a.e.\,\,\xi\in [0,T],\\
\theta(0)&=&h(\theta).
\end{array}
\right.
\end{eqnarray*}

 \item  If $\alpha=1$, $\Omega_{1}=\mathbb{R}^{m}$, $\Omega_{2}=\mathbb{R}^{n}$, $\mathcal{A}=0$ and $G: K \rightarrow \mathbb{R}^{m}$ is a single valued mapping, then (\ref{principal}) reduces to the following differential variational inequality  in finite-dimensional spaces
    \begin{eqnarray*}
\left\{
 \begin{array}{crl}
 \theta'(\xi)&=& \mathcal{B}(\xi,\theta(\xi))u(\xi)+f(\xi,\theta(\xi)), \,\, a.e.\,\,\xi\in [0,T],\\
u(\xi) &\in& SOL(K,g(\xi,\theta(\xi))+G(\cdot),\phi),\,\,a.e.\,\,\xi\in [0,T],\\
\theta(0)&=&h(\theta).
\end{array}
\right.
\end{eqnarray*}

\item More particularly, when $\alpha=1$ and $\Omega_{2} =\mathbb{R}^{n}$, $\Omega_{1} =\mathbb{R}^{m}$, $A= 0$ and $h(x) = x_{0}$, the fractional problem (\ref{principal}) becomes the problems studied by Gwinner \cite{5}; Li, Huang and O'Regan \cite{13}; Liu, Loi and Obukhovskii \cite{15} and Pang and Stewart \cite{19}.
\end{enumerate}

The article is organized as follows. In Section~\ref{Section2}, we present some definitions involving the one and two-parameter Mittag-Leffler functions, as well as several important results which will be used in next sections. In the mean time, we also present some results that help in the discussion of the work, in a special way, some conditions were imposed to guarantee the main result of the paper. Finally, in Section~\ref{Section3}, we will attack the main result of this present paper, i.e., the existence of mild solutions to the fractional problem (\ref{principal}).

\section{Mathematical background: auxiliaries results}\label{Section2}

We start with a generalization of the Cauchy representation for semigroup associated to positive sectorial operators. To this end, we recall that given $\varepsilon > 0$ and $\theta \in(\frac{\pi}{2},\pi)$, the Hankel's path $Ha = Ha(\varepsilon,\theta)$ is the path given by $Ha = Ha_{1} + Ha_{2} - Ha_{3}$ , where $Ha_{i}$ are given by  (e.g., see \cite{Li11,Matar})
\begin{eqnarray}\label{ko}
    Ha_{1}:={t e^{i\theta},\,t\in [\varepsilon,\infty)},\,\, Ha_{2}:={\varepsilon e^{i\theta},\,t\in [-\theta, \theta]},\,\, Ha_{3}:={t e^{-i\theta},\,t\in [\varepsilon,\infty)}
\end{eqnarray}

Let $\alpha\in (0,1)$ and suppose that $\mathcal{A}:D(\mathcal{A})\subset X\rightarrow X$ is a positive sectorial operator. Then, the operators \cite{Li11,Matar}
\begin{equation*}
   \mathbb{E}_{\alpha} (-\xi^{\alpha}\mathcal{A}):= \frac{1}{2\pi i} \int_{Ha} e^{\lambda \xi} \lambda^{\alpha-1} (\lambda+\mathcal{A})^{-1} d\lambda,\,\,\xi\geq 0
\end{equation*}
and
\begin{equation*}
   \mathbb{E}_{\alpha,\alpha} (-\xi^{\alpha}\mathcal{A}):= \frac{t^{1-\alpha}}{2\pi i} \int_{Ha} e^{\lambda \xi} (\lambda+\mathcal{A})^{-1} d\lambda,\,\,\xi\geq 0
\end{equation*}
with $Ha\subset \rho(-\mathcal{A})$ (see Eq.(\ref{ko})), are well defined and $\mathbb{E}_{\alpha} (-\xi^{\alpha} \mathcal{A})$ is strongly continuous, i.e.,
\begin{equation*}
    \lim_{\xi\rightarrow 0+} \left\|\mathbb{E}_{\alpha} (-\xi^{\alpha}\mathcal{A})\theta- \theta) \right\|=0.
\end{equation*}
In the sequel, we can write the operators $\left\{ \mathbb{E}_{\alpha} (-\xi^{\alpha} \mathcal{A})\right\}$ and $\left\{ \mathbb{E}_{\alpha,\alpha} (-\xi^{\alpha} \mathcal{A})\right\}$ as follows \cite{Li11,Matar}
\begin{equation*}
\mathbb{E}_{\alpha}(-\xi^{\alpha} \mathcal{A}) = \int_{0}^{\infty} \Theta_{\alpha}(s) T(s\xi^{\alpha}) ds,\,\,\xi\geq 0,
\end{equation*}
and
\begin{equation*}
\mathbb{E}_{\alpha,\alpha}(-\xi^{\alpha} \mathcal{A}) = \int_{0}^{\infty} \alpha s\Theta_{\alpha}(s) T(s\xi^{\alpha}) ds,\,\,\xi\geq 0,
\end{equation*}
where $\left\{T(\xi):\, \xi\geq 0\right\}$ is the $C_{0}$-semigroup generated by $-\mathcal{A}$.

Consider the topological space $Y$ and $P(Y)$ the collection of all nonempty subsets of $Y$. In this sense, let us consider the following notations:

\begin{itemize}
\item $b(Y)=\left\{D\in P(Y): D\,\,is\,\, bounded \right\}$;

    \item $C(Y):=\left\{D\in P(Y): D\,\,is\,\,closed \right\}$;

    \item $C b(Y):=\left\{ D\in P(Y): D\,\,is\,\,closed\,\,and\,\, bounded\right\}$;

    \item $K(Y):=\left\{D\in P(Y): D\,\,is\,\,compact \right\}$;

    \item $K v(Y):=\left\{D\in P(Y): D\,\,is\,\, compact\,\,and\,\,convex \right\}$;

    \item $P v(Y):=\left\{D\in P(Y): D\,\,is\,\,convex \right\}$;

    \item $C v(Y):=\left\{D\in P(Y): D\,\,is\,\,is \,\,closed\,\, and\,\,convex \right\}$.

    \end{itemize}
    \begin{definition}\label{Definition2.1} Let $X$ and $Y$ be topological spaces and $\Psi:X\rightarrow P(Y)$ be a set valued mapping. We say that $\Psi$ is
    \begin{enumerate}
        \item[(i)] upper semicontinuous (u.s.c.) at $\theta\in X$ if, for every open set $O\subset Y$ with $\Psi(\theta) \subset O$ there exists a neighborhood $N(\theta)$ of $\theta$ such that $\Psi(N(\theta))=:\bigcup_{u\in N(\theta)} \Psi(u)\subset O$. If this holds for every $\theta\in X$, then $\Psi$ is called u.s.c. on $X$.

        \item[(ii)] lower semicontinuous (l.s.c.) at $\theta\in X$ for every open set $O\subset Y$ with $\Psi(\theta)\cap O\neq \emptyset$, there exists a neighborhood $N(\theta)$ of $\theta$ such that $\Psi(y)\cap O\neq \emptyset$ for all $y\in N(\theta)$. If this holds for every $\theta\in X$, then $\Psi$ is called l.s.c. on $X$.

        \item[(iii)] Continuous at $\theta\in X$ if it is both u.s.c. and l.s.c. at $\theta\in X$. If this holds for every $\theta\in X$, then $\Psi$ is called continuous on $X$.
    \end{enumerate}

    \end{definition}

    \begin{definition}\cite{Migorski256,19}\label{Definition2.4} Let $\Omega$ be a Banach space and $I\subset \mathbb{R}$ be an interval.
    \begin{enumerate}
        \item[(i)] $\Psi:I\rightarrow P(\Omega)$ is said to be measurable if for every open subset $O\subset \Omega$ the set $\Psi^{+}(O)$ is measurable in $\mathbb{R}$;

        \item[(ii)] $\Psi:I\rightarrow Cb(\Omega)$ is said to be strongly measurable if there exists a sequence $\left\{\Psi_{n} \right\}_{n=1}^{\infty}$ of step set-valued mappings such that
        \begin{equation*}
            \mathcal{H}(\Psi(\xi),\Psi_{n}(\xi))\rightarrow 0\,\,as\,\,n\rightarrow\infty\,\,for\,\,a.e.\,\,\xi\in I
        \end{equation*}
where $\mathcal{H}$ is the Hausdorff metric on $Cb(\Omega)$.
    \end{enumerate}
    \end{definition}

\begin{definition}\label{Definition2.6} \cite{Migorski256} A set-valued mapping $\Psi: K\subset \Omega \rightarrow P(\Omega)$ is said to be condensing relative to a measure of non compactness (MNC, for short) $\mu$ (or $\mu$-condensing) if for every $\Omega\subset K$ that is not relatively compact we have
\begin{equation*}
    \mu(\Psi(\Omega)) \ngeq\mu(\Omega),
\end{equation*}
where $\mu(\cdot)$ is an MNC in $\Omega$.
\end{definition}

\begin{definition}\label{Definition2.7} Let $K$ be a subset real Banach space $\Omega$ with its dual $\Omega^{*}$.
\begin{enumerate}
    \item[(i)] $\Psi:K\rightarrow \Omega^{*}$ is said to be monotone, if
    \begin{equation*}
        \left<\Psi(v)-\Psi(u),v-u \right>\geq 0,\,\,\forall u,v\in K;
    \end{equation*}

    \item[(ii)] $\phi: \Omega\rightarrow\overline{\mathbb{R}}:=\mathbb{R}\cup \left\{+\infty\right\}$ is said to be proper, convex and lower semicontinuous, if
    \begin{equation*}
        \phi(\lambda u_{1}+(1-\lambda)u_{2}) \leq \lambda\phi(u_1)+(1-\lambda)\phi(u_2)
    \end{equation*}
for all $u_{1},u_{2}\in \Omega$ and $\lambda\in [0,1]$;
\begin{equation*}
    \underset{n\rightarrow\infty}{\lim\inf} \phi(u_{n})\geq \phi(u^*)
\end{equation*}
for $u_{n}\rightarrow u^{*}$ in $\Omega$ and $\left\{u\in \Omega: \phi(u)<+\infty \right\}\neq\emptyset$.
\end{enumerate}
\end{definition}

Consider the formula
\begin{eqnarray}\label{2.1}
    \chi_{T}(\Omega):= \frac{1}{2} \lim_{\delta\rightarrow 0}\,\,\sup_{x\in\Omega}\,\, \max_{|t_{1}-t_{2}|\leq \delta} \left\|x(t_{1})-x(t_{2}) \right\|_{\Omega}
\end{eqnarray}
which will be used later. It is not difficult to check that $\chi_{T}(\Omega)$ satisfies all properties of {\bf Definition 2.5} \cite{principal}.

\begin{theorem}\label{Theorem2.8}\cite{Migorski256} Let $\Theta$ be a convex closed subset of Banach space $\Omega$ and $\Psi:\Theta\rightarrow K v(\Theta)$ be a closed $\mu$-condensing set-valued mapping, where $\mu$ is a non-singular MNC defined on subsets of $\Theta$. Then, $Fix\,\, \Psi:=\left\{\theta\in \Theta:\theta\in \Psi(\theta)\right\}\neq \emptyset$.
\end{theorem}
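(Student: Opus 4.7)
The plan is to reduce the problem to a fixed-point statement on a compact convex set, where the classical Kakutani--Bohnenblust--Karlin (equivalently Fan--Glicksberg) theorem applies, and to carry out the reduction via a Sadovskii-type argument using the $\mu$-condensing hypothesis. Throughout I fix some $\theta_{0}\in\Theta$ (if $\Theta=\emptyset$ the claim is vacuous).

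First I would introduce the family
\begin{equation*}
    \mathcal{F}:=\bigl\{D\subseteq\Theta:\ D\ \text{nonempty, closed and convex},\ \theta_{0}\in D,\ \Psi(D)\subseteq D\bigr\}.
\end{equation*}
Since $\Theta\in\mathcal{F}$ the family is nonempty, and since the intersection of any chain still lies in $\mathcal{F}$, Zorn's lemma, applied with reverse inclusion, supplies a minimal element $D^{*}\in\mathcal{F}$. Setting $D':=\overline{\mathrm{co}}(\Psi(D^{*})\cup\{\theta_{0}\})$, one checks that $D'\in\mathcal{F}$ and $D'\subseteq D^{*}$, so by minimality
\begin{equation*}
    D^{*}=\overline{\mathrm{co}}\bigl(\Psi(D^{*})\cup\{\theta_{0}\}\bigr).
\end{equation*}

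Next I would extract compactness from this self-reproducing identity. Using the standard properties of a non-singular MNC (monotonicity, invariance under the closed convex hull, and the fact that adjoining a single point does not change $\mu$), I would compute
\begin{equation*}
    \mu(D^{*})=\mu\bigl(\overline{\mathrm{co}}(\Psi(D^{*})\cup\{\theta_{0}\})\bigr)=\mu(\Psi(D^{*})).
\end{equation*}
If $D^{*}$ were not relatively compact, the $\mu$-condensing hypothesis of Definition~\ref{Definition2.6} would give $\mu(\Psi(D^{*}))\not\geq\mu(D^{*})$, contradicting the equality above. Hence $D^{*}$ is relatively compact, and being also closed it is a nonempty compact convex subset of $\Theta$.

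Finally, I would apply the Kakutani--Bohnenblust--Karlin fixed-point theorem to the restriction $\Psi|_{D^{*}}\colon D^{*}\to Kv(D^{*})$. Since $\Psi$ is closed (i.e.\ has a closed graph) and has compact values, while $D^{*}$ itself is compact, this restriction is automatically upper semicontinuous with compact convex values; Kakutani--Bohnenblust--Karlin then yields $\theta^{*}\in D^{*}\subseteq\Theta$ with $\theta^{*}\in\Psi(\theta^{*})$, i.e.\ $\mathrm{Fix}\,\Psi\neq\emptyset$.

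I expect the main obstacle to be the MNC bookkeeping in the middle paragraph, namely verifying that a non-singular MNC in the sense used here genuinely satisfies the invariance $\mu(\overline{\mathrm{co}}(A\cup\{x\}))=\mu(A)$; this is well known for the Kuratowski and Hausdorff measures but has to be pinned down from the precise axiomatic list of properties (invariance under $\overline{\mathrm{co}}$, together with $\mu(\{x\})=0$ and the union rule $\mu(A\cup B)=\max\{\mu(A),\mu(B)\}$ for finite $B$) implicit in the phrase ``non-singular MNC''. The Zorn--minimality step and the final application of Kakutani--Bohnenblust--Karlin are then essentially bookkeeping.
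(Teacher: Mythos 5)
The paper does not prove Theorem~\ref{Theorem2.8}; it is quoted without proof from the monograph of Migorski, Ochal and Sofonea, so there is no internal argument to compare against. Your proposal is correct and is essentially the standard proof of this result as given in that reference (and in Kamenskii--Obukhovskii--Zecca): one produces a minimal nonempty closed convex $\Psi$-invariant set $D^{*}$ containing a fixed point $\theta_{0}$, derives the self-reproducing identity $D^{*}=\overline{\mathrm{co}}(\Psi(D^{*})\cup\{\theta_{0}\})$, uses invariance of the MNC under closed convex hulls together with non-singularity to get $\mu(D^{*})=\mu(\Psi(D^{*}))$, concludes compactness of $D^{*}$ from Definition~\ref{Definition2.6}, and finishes with Bohnenblust--Karlin. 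The only cosmetic difference is that the cited source builds $D^{*}$ as the intersection of all ``fundamental sets'' rather than via Zorn's lemma; in fact you can skip Zorn entirely, since the intersection of your whole family $\mathcal{F}$ already lies in $\mathcal{F}$ and is its minimum. Two small points worth making explicit: (i) the identity $\mu(\overline{\mathrm{co}}(A))=\mu(A)$ is part of the very definition of an MNC in this framework (it is not an extra property to be checked), while adjoining the single point $\theta_{0}$ is exactly what ``non-singular'' licenses, so your worried middle step is safe; and (ii) the passage from ``closed graph with values in the compact set $D^{*}$'' to upper semicontinuity of $\Psi|_{D^{*}}$ is the standard fact that a closed multimap with relatively compact range is u.s.c., which is what makes Bohnenblust--Karlin applicable.
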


Consider the following generalized mixed variational inequality:

(VI): Given $w\in \Omega^{*}$, find $u \in K$ such that
\begin{equation}\label{3.1}
    \left< w+G(u),v-u\right>+\phi(v)-\phi(u) \geq0,\,\,\forall v\in K
\end{equation}
where $G:K\rightarrow \Omega^{*}$. We consider the solution set of the mixed variational inequality (\ref{3.1}) by $SOL(K,w+G(\cdot),\phi)$.

Below we will present some extremely important conditions to present some results that served in the discussion of the main results of this paper, i.e., let us consider the following conditions:

$({\bf P_{1}})$ $G:\Omega_{1}\rightarrow \Omega_{1}^{*}$ is monotone and hemi-continuous on $K$;

$({\bf P_{2}})$ $\phi:\Omega_{1}\rightarrow \overline{\mathbb{R}}$ is proper convex and lower semi-continuous;

$({\bf P_{3}})$ there exists $v^{*}\in K\cap D(\phi)$ such that
\begin{equation*}
    \underset{u\in K, ||u||_{\Omega_{1}}\rightarrow\infty}{\lim \infty} \frac{\left< G(u), u-v^{*}\right>+\phi(u)-\phi(v^{*})}{||u||_{\Omega_{1}}}\rightarrow+\infty;
\end{equation*}

$({\bf P_{4}})$ $g:[0,T]\times \Omega_{2}\rightarrow \Omega_{1}^{*}$ is continuous and bounded;

$({\bf P_{5}})$ $\mathcal{B}:[0,T]\times \Omega_{2}\rightarrow \mathscr{L} (\Omega_{1},\Omega_{2})$ satisfies Caratheodory conditions, i.e., $\mathcal{B}(\cdot,\theta):[0,T]\rightarrow\mathscr{L}(\Omega_{1},\Omega_{2})$ is measurable for all $\theta\in \Omega_{2}$ and $\mathcal{B}(\xi,\cdot):\Omega_{2}\rightarrow\mathscr{L}(\Omega_{2},\Omega_{1})$ is continuous for a.e. $\xi\in [0,T]$, where $\mathscr{L}(\Omega_{1},\Omega_{2})$ denotes the class of linear bounded operators from $\Omega_{1}$ to $\Omega_{2}$, and there exist $\rho_\mathcal{B}\in L^{2}([0,T],\mathbb{R}_{+})$ and non-decreasing continuous function $\Upsilon_{\mathcal{B}}:\mathbb{R}_{+}\rightarrow\mathbb{R}_{+}$ such that
\begin{equation*}
    \left\|\mathcal{B}(\xi,\theta) \right\|\leq \rho_\mathcal{B}(\xi) \Upsilon_\mathcal{B}(||\theta||_{\Omega_{2}})
\end{equation*}
for all $(\xi,\theta)\in [0,T]\times \Omega_{2}$;

$({\bf P_{6}})$ Let $h: C([0,T],\Omega_{2})\rightarrow \Omega_{2}$ be an continuous function. Therefore, there exists a non-decreasing continuous function $\Upsilon_{h}$ such that
\begin{equation}\label{4.2}
    \left\|h(\theta) \right\|_{\Omega_{2}}\leq \Upsilon_{h} (||\theta||_{c}),\,\,\forall \theta\in C([0,T],\Omega_{2}),
\end{equation}
where $||\theta||_{c}=\sup_{t\in[0,T]}\|\theta(t)\|_{\Omega_2}$.

$({\bf P_{7}})$ $f(\cdot,\theta): [0,T]\rightarrow \Omega_{2}$ is measurable for all $\theta\in \Omega_{2}$ and there exists $\rho_{f}\in L^{2}([0,T],\mathbb{R}_{+})$
such that for a.e. $\xi\in [0,T]$ we have
\begin{eqnarray}\label{1.11}
\left\{
 \begin{array}{rcl}
 \left\|f(\xi,\theta)-f(\xi,y) \right\|_{\Omega_{2}}&\leq&\rho_{f}(\xi) \left\|\theta-y \right\|_{\Omega_{2}} \,\,\forall \,\theta,y\in \Omega_{2}\notag\\
\left\| f(\xi,0)\right\|_{\Omega_{2}} &\leq&  \rho_{f}(\xi). \notag
\end{array}
\right.
\end{eqnarray}

\begin{theorem}\label{Theorem3.3} \cite{principal} Under the conditions $({\bf P_{1}})-({\bf P_{3}})$ for all $w\in \overline{\mathcal{B}}(n,\Omega_{1}^{*}):=\left\{ w\in \Omega_{1}^{*}: ||u||_{\Omega_{1}^{*}}\leq n\right\}$, there exists  $\Theta_{n}>0$  such that
\begin{equation}\label{3.6}
    ||u||_{\Omega_{1}^{*}}\leq \Theta_{n},\,\, \forall u\in SOL(K,w+G(\cdot), \phi).
\end{equation}
\end{theorem}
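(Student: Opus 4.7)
The natural approach is proof by contradiction, using the coercivity hypothesis $({\bf P_3})$ against the variational inequality tested at the distinguished point $v^{*}$.

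The plan is to suppose, for contradiction, that the conclusion fails at some fixed level $n$. Then there exist sequences $\{w_{k}\}\subset\overline{\mathcal{B}}(n,\Omega_{1}^{*})$ and $u_{k}\in SOL(K,w_{k}+G(\cdot),\phi)$ with $\|u_{k}\|_{\Omega_{1}}\to\infty$ as $k\to\infty$. In particular, the $u_{k}$ are eventually nonzero, so division by $\|u_{k}\|_{\Omega_{1}}$ will make sense.

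Next I would feed $v=v^{*}$ (the element furnished by $({\bf P_{3}})$, which lies in $K\cap D(\phi)$, so that $\phi(v^{*})\in\mathbb{R}$) into the defining inequality
\begin{equation*}
\langle w_{k}+G(u_{k}),v-u_{k}\rangle+\phi(v)-\phi(u_{k})\geq 0,\qquad \forall v\in K,
\end{equation*}
and rearrange the resulting expression to separate the coercive quantity on the left:
\begin{equation*}
\langle G(u_{k}),u_{k}-v^{*}\rangle+\phi(u_{k})-\phi(v^{*})\leq\langle w_{k},v^{*}-u_{k}\rangle.
\end{equation*}
The right-hand side is controlled by the duality pairing bound $\langle w_{k},v^{*}-u_{k}\rangle\leq\|w_{k}\|_{\Omega_{1}^{*}}(\|v^{*}\|_{\Omega_{1}}+\|u_{k}\|_{\Omega_{1}})\leq n(\|v^{*}\|_{\Omega_{1}}+\|u_{k}\|_{\Omega_{1}})$.

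Dividing through by $\|u_{k}\|_{\Omega_{1}}>0$ and passing to the limit $k\to\infty$ yields
\begin{equation*}
\liminf_{k\to\infty}\frac{\langle G(u_{k}),u_{k}-v^{*}\rangle+\phi(u_{k})-\phi(v^{*})}{\|u_{k}\|_{\Omega_{1}}}\leq n,
\end{equation*}
while $({\bf P_{3}})$ forces the same quantity to diverge to $+\infty$. This contradiction closes the argument, giving the desired uniform bound $\Theta_{n}$ depending only on $n$, $\|v^{*}\|_{\Omega_{1}}$, $\phi(v^{*})$, and the growth rate in $({\bf P_{3}})$. I do not anticipate a serious obstacle here; conditions $({\bf P_{1}})$ and $({\bf P_{2}})$ are needed only implicitly to ensure that $SOL(K,w+G(\cdot),\phi)$ is a well-posed object and that $\phi(v^{*})$ is finite, while the monotonicity/hemicontinuity of $G$ play no direct role in this particular a priori estimate. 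The only subtlety to note is the implicit typographical remark that the conclusion should read $\|u\|_{\Omega_{1}}\leq\Theta_{n}$, since $u\in K\subset\Omega_{1}$.
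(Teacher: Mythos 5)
Your argument is correct and is essentially the standard proof of this a priori bound: the paper itself states Theorem \ref{Theorem3.3} without proof, citing \cite{principal}, and the proof there proceeds exactly as you do, by contradiction, testing the inequality at the coercivity point $v^{*}$ from $({\bf P_{3}})$, bounding $\langle w_{k}, v^{*}-u_{k}\rangle$ by $n(\|v^{*}\|_{\Omega_{1}}+\|u_{k}\|_{\Omega_{1}})$, and dividing by $\|u_{k}\|_{\Omega_{1}}$. Your side remarks are also accurate: the conclusion (and the ball's definition) should indeed read $\|u\|_{\Omega_{1}}\leq\Theta_{n}$ and $\|w\|_{\Omega_{1}^{*}}\leq n$, and $({\bf P_{1}})$--$({\bf P_{2}})$ enter only to make the solution set well defined (note in passing that the test $v=v^{*}$ forces $\phi(u_{k})<+\infty$, so the rearrangement is legitimate).
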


Let $g:[0,T]\times \Omega_{2}\rightarrow \Omega_{1}^{*}$. Define the set valued mapping $U: [0,T]\times \Omega_{2}\rightarrow P(K)$ as follows
\begin{eqnarray}\label{3.7}
    U(\xi,\theta):=\left\{u\in K: u\in SOL(K,g(\xi,\theta)+G(\cdot),\phi) \right\}.
\end{eqnarray}

\begin{theorem}\label{Theorem3.4}\cite{principal} Under the conditions $({\bf P_{1}})-({\bf P_{3}})$, if $g:[0,T]\times \Omega_{2}\rightarrow \Omega_{1}^{*}$ is a bounded and continuous function, then we have
\begin{enumerate}
    \item[(i)] $U$ is strongly-weakly u.s.c;

    \item[(ii)] $\xi\mapsto U(\xi,\theta)$ is measurable for every $\theta\in \Omega_{2}$;

    \item[(iii)] for each bounded subset $\Omega$ of $C([0,T],\Omega_{2})$, there exists a constant $\Theta_{\Omega}$ such that
    \begin{equation}\label{3.8}
        \left\|U(\xi,\theta(\xi)) \right\|:= \sup \left\{||u||_{\Omega_{1}}; u\in U(\xi,\theta(\xi)) \right\}\leq \Theta_{\Omega},\,\,\forall \xi\in [0,T]\,\,and\,\, \theta\in \Omega.
    \end{equation}
\end{enumerate}
\end{theorem}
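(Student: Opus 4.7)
The plan is to prove the statements in the order (iii), (i), (ii), so that the a priori bound from (iii) feeds the compactness extraction in (i), and (i) in turn yields (ii) by composition.

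For (iii), since $g$ is bounded there exists $M>0$ with $\|g(\xi,\theta)\|_{\Omega_1^*}\le M$ uniformly in $(\xi,\theta)$, so for any bounded $\Omega\subset C([0,T],\Omega_2)$ one has $g(\xi,\theta(\xi))\in\overline{\mathcal{B}}(M,\Omega_1^*)$ for every $\xi\in[0,T]$ and $\theta\in\Omega$. Theorem~\ref{Theorem3.3} applied with $n=M$ then delivers a constant $\Theta_\Omega:=\Theta_M$ with $\|u\|_{\Omega_1}\le\Theta_\Omega$ for every $u\in U(\xi,\theta(\xi))$, which is exactly (\ref{3.8}).

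For (i), I would use the sequential characterization of strong-weak upper semicontinuity in the reflexive setting: given $(\xi_n,\theta_n)\to(\xi,\theta)$ strongly and $u_n\in U(\xi_n,\theta_n)$, extract a weakly convergent subsequence $u_{n_k}\rightharpoonup u^*$ (possible by (iii) and reflexivity of $\Omega_1$), and note $u^*\in K$ by weak closedness of the closed convex set $K$. Starting from
\begin{equation*}
\langle g(\xi_{n_k},\theta_{n_k})+G(u_{n_k}),v-u_{n_k}\rangle+\phi(v)-\phi(u_{n_k})\ge 0\qquad(\forall v\in K),
\end{equation*}
I would replace $G(u_{n_k})$ by $G(v)$ using monotonicity of $G$ and then pass to the limit: the pairing involving $g(\xi_{n_k},\theta_{n_k})\to g(\xi,\theta)$ survives by continuity of the strong-weak duality, $\langle G(v),v-u_{n_k}\rangle\to\langle G(v),v-u^*\rangle$ is immediate, and weak lower semicontinuity of the proper convex lsc $\phi$ gives $\liminf\phi(u_{n_k})\ge\phi(u^*)$. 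This yields the Minty form
\begin{equation*}
\langle g(\xi,\theta)+G(v),v-u^*\rangle+\phi(v)-\phi(u^*)\ge 0\qquad(\forall v\in K).
\end{equation*}
The classical Minty trick---substituting $v=u^*+t(w-u^*)$ for $w\in K$ and $t\in(0,1)$, exploiting convexity of $\phi$ to estimate $\phi(v)-\phi(u^*)\le t(\phi(w)-\phi(u^*))$, dividing by $t$, and letting $t\to 0^+$ together with hemi-continuity of $G$---recovers the original variational inequality at $u^*$, proving $u^*\in U(\xi,\theta)$.

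For (ii), fix $\theta\in\Omega_2$. Continuity of $g$ makes $\xi\mapsto g(\xi,\theta)$ continuous into $\Omega_1^*$; composing with the strong-weak u.s.c. solution map from (i) gives a strong-weak u.s.c. set-valued map $\xi\mapsto U(\xi,\theta)$ whose values lie in the weakly compact set $\{u\in K:\|u\|_{\Omega_1}\le\Theta_\theta\}$ furnished by (iii). Standard multivalued analysis (u.s.c.\ with weakly compact values in a reflexive space, combined with metrizability of the weak topology on bounded sets of the separable reflexive $\Omega_1$) then delivers measurability in the sense of Definition~\ref{Definition2.4}(i). The main obstacle is part (i): since $G$ is only monotone and hemi-continuous, no strong convergence is available in the pairing $\langle G(u_{n_k}),v-u_{n_k}\rangle$, and the Minty reformulation is the key device that turns this obstruction into a limit controllable by monotonicity and the directional regularity supplied by hemi-continuity.
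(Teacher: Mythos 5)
The paper does not actually prove Theorem~\ref{Theorem3.4}: it is quoted without proof from \cite{principal}, so there is no in-paper argument to compare against. Your reconstruction is correct and follows exactly the standard route used in that reference: the coercivity estimate of Theorem~\ref{Theorem3.3} gives (iii), weak sequential compactness of the bounded solution sets together with the Minty transposition (monotonicity to pass to the limit, weak lower semicontinuity of $\phi$, then hemicontinuity of $G$ to return to the Stampacchia form) gives (i), and composition with the continuous map $\xi\mapsto g(\xi,\theta)$ gives (ii). The only points your sketch leaves implicit are (a) that sequential weak closedness plus containment of the values in a weakly compact, weakly metrizable set is what upgrades the subsequence argument to genuine strong--weak upper semicontinuity, and (b) in (ii), that weak measurability and measurability in the sense of Definition~\ref{Definition2.4}(i) coincide for closed convex valued multimaps into a separable reflexive space; both are standard but should be stated or cited.
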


Recall that $\Omega_{1}$ is separable,  using the {\bf Theorem \ref{Theorem3.4}}, it is easy to show that $U(\xi,\theta(\xi))$ admits a measurable selection $l$ such that $l\in L^{\infty}([0,T],\Omega_{1})\subset L^{2}([0,T],\Omega_{1})$ for each $\theta\in C([0,T],\Omega_{2})$. So
\begin{eqnarray}\label{4.1}
    P_{U}(\theta):=\left\{l\in L^{2}([0,T],\Omega_{1}): l(\xi)\subset U(\xi,\theta(\xi))\,\,for\,\,a.e.\,\, \xi\in [0,T] \right\}
\end{eqnarray}
is well-defined for each $\theta\in C([0,T],\Omega_{2})$.

\begin{lemma}\label{Lemma4.1}\cite{principal} Assume that conditions $({\bf P_{1}})-({\bf P_{3}})$ hold and $g:[0,T]\times \Omega_{2}\rightarrow \Omega_{2}^{*}$, is continuous and bounded. Then, set-valued mapping $P_{U}$ is strongly-weakly u.s.c.
\end{lemma}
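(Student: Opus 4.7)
To prove Lemma \ref{Lemma4.1} I would first observe that $P_U(\theta)$ inherits structural niceness from $U$. Each $U(\xi,\theta(\xi))$ is convex and closed: it is the solution set of a mixed variational inequality for which the Minty reformulation, valid under $({\bf P_{1}})$--$({\bf P_{2}})$, yields convexity, while closedness follows from hemi-continuity of $G$. Combined with Theorem \ref{Theorem3.4}(iii) this makes $P_U(\theta)$ a nonempty, convex, uniformly bounded subset of $L^{\infty}([0,T],\Omega_{1})\subset L^{2}([0,T],\Omega_{1})$; a standard Mazur-lemma argument shows it is also weakly closed in $L^2$, hence weakly compact. Because $P_U$ moreover maps bounded subsets of $C([0,T],\Omega_{2})$ into bounded (hence weakly relatively compact) subsets of $L^2$, the claim of strong-to-weak upper semicontinuity reduces to proving that $P_U$ has a sequentially closed graph from the strong topology of $C$ into the weak topology of $L^2$.

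The central task is then: given $\theta_n\to\theta$ strongly in $C([0,T],\Omega_{2})$ and $l_n\in P_U(\theta_n)$ with $l_n\rightharpoonup l$ weakly in $L^2([0,T],\Omega_{1})$, show $l\in P_U(\theta)$. I would apply Mazur's lemma to produce convex combinations $\widetilde l_n=\sum_{k\ge n}\alpha_{n,k}l_k$ converging strongly to $l$ in $L^2$, and after extracting a subsequence assume $\widetilde l_n(\xi)\to l(\xi)$ in $\Omega_1$ for a.e.\ $\xi\in[0,T]$. For every such good $\xi$, the uniform convergence $\theta_n\to\theta$ in $C$ guarantees $\theta_n(\xi)\to\theta(\xi)$ strongly in $\Omega_2$.

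To verify $l(\xi)\in U(\xi,\theta(\xi))$ at such a $\xi$, I would argue by contradiction via Hahn--Banach. If $l(\xi)\notin U(\xi,\theta(\xi))$, there exist $f\in\Omega_{1}^{*}$ and $c\in\R$ with $\sup_{u\in U(\xi,\theta(\xi))}\langle f,u\rangle<c<\langle f,l(\xi)\rangle$. The weakly open convex half-space $V=\{u\in\Omega_{1}:\langle f,u\rangle<c\}$ contains $U(\xi,\theta(\xi))$, so the strong-weak upper semicontinuity of $U$ from Theorem \ref{Theorem3.4}(i), together with $\theta_k(\xi)\to\theta(\xi)$, yields $U(\xi,\theta_k(\xi))\subset V$ for all large $k$; in particular $l_k(\xi)\in V$ eventually, and by convexity of $V$ the combinations $\widetilde l_n(\xi)$ also lie in $V$ for large $n$, so $\langle f,\widetilde l_n(\xi)\rangle<c$. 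Passing to the strong limit gives $\langle f,l(\xi)\rangle\le c$, a contradiction; hence $l\in P_U(\theta)$ and the closed-graph property is established.

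The main obstacle is precisely this pointwise step: weak $L^2$-convergence of selections transfers no direct information to individual fibers $\xi$, and the constraint sets $U(\xi,\theta_n(\xi))$ themselves vary with $n$. Mazur's lemma plus Hahn--Banach bridges the gap: Mazur converts weak $L^2$-convergence into strong pointwise convergence of convex combinations, while the separating hyperplane produces a weakly continuous affine functional isolating the hypothetical stray point $l(\xi)$ from the closed convex target $U(\xi,\theta(\xi))$, so that the strong-weak upper semicontinuity of $U$ can be made to deliver the contradiction.
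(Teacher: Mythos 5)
The paper does not actually prove Lemma \ref{Lemma4.1}; it is quoted without proof from \cite{principal}, so there is no in-text argument to compare against. Your proposal is correct and follows essentially the standard route used in that source: reduce strong--weak upper semicontinuity to a sequentially closed graph via the uniform $L^{2}$-bound from Theorem \ref{Theorem3.4}(iii) and weak compactness of bounded sets in the reflexive space $L^{2}([0,T],\Omega_{1})$, then establish graph closedness by combining Mazur's lemma with the strong--weak upper semicontinuity of $U$ and a Hahn--Banach separation at almost every fiber $\xi$.
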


\section{Existence theorems}\label{Section3}

In this section, we will attack the main contribution of this present paper, i.e., the existence of smooth solutions for fractional differential variational inequality (\ref{principal}).

\begin{theorem}\label{Theorem4.2} Under the conditions $({\bf P_{1}})-({\bf P_{7}})$, if  $\mathbb{E}_{\alpha} (\xi^{\alpha} \mathcal{A})$ is compact for $\xi>0$ and the following inequality holds
\begin{eqnarray}\label{4.3}
    \lim \inf_{k\rightarrow\infty} \left(\Theta_{||g||} ||\rho_\mathcal{B}||_{L^{2}}\frac{\Upsilon_\mathcal{B}(k)}{k}+||\rho_{f}||_{L^{2}}+\frac{\Upsilon_{h}(k)}{k T^{1/2}}  \right)< \frac{1}{\Theta_{\mathcal{A}} T^{1/2}}
\end{eqnarray}
where $\Theta_{\mathcal{A}}:=\sup_{\xi\in [0,T]} \left\|(\xi-s)^{\alpha-1}\mathbb{E}_{\alpha\,\alpha}((\xi-s)^{\alpha}\mathcal{A}) \right\|$ and $\Theta_{||g||}>0$ is the constant (see {\bf Theorem \ref{Theorem3.3}}), then the fractional differential variational inequality {\rm (\ref{principal})} has at least one mild solution $(\theta,u)$.
\end{theorem}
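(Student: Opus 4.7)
The strategy is to recast (\ref{principal}) as a multivalued fixed-point problem. Define the operator $\mathcal{N}:C([0,T],\Omega_{2})\to P(C([0,T],\Omega_{2}))$ by
\begin{equation*}
\mathcal{N}(\theta):=\left\{\eta:\eta(\xi)=\mathbb{E}_{\alpha}(\xi^{\alpha}\mathcal{A})h(\theta)+\int_{0}^{\xi}(\xi-s)^{\alpha-1}\mathbb{E}_{\alpha,\alpha}((\xi-s)^{\alpha}\mathcal{A})\bigl(\mathcal{B}(s,\theta(s))u(s)+f(s,\theta(s))\bigr)ds,\ u\in P_{U}(\theta)\right\}.
\end{equation*}
By Definition \ref{Definition1.1}, any fixed point of $\mathcal{N}$, paired with an associated selection $u\in P_{U}(\theta)$, yields a mild solution of (\ref{principal}), so the task reduces to producing a fixed point via Theorem \ref{Theorem2.8} on a suitable closed convex ball $B_{R}:=\{\theta\in C([0,T],\Omega_{2}):\|\theta\|_{c}\le R\}$.

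The first block of work is to show that $\mathcal{N}$ has nonempty convex compact values, sends some $B_{R}$ into itself, and is relatively compact on $B_{R}$. Nonemptiness of $P_{U}(\theta)$ comes from the measurable selection argument preceding (\ref{4.1}), convexity from $({\bf P_{1}})$-$({\bf P_{2}})$ (which force $SOL(K,g(\xi,\theta(\xi))+G(\cdot),\phi)$ to be convex) together with linearity of the integral in $u$. To locate $R$, I bound a generic $\eta\in\mathcal{N}(\theta)$ using $\|u(s)\|_{\Omega_{1}}\le\Theta_{\|g\|}$ from Theorem \ref{Theorem3.3}, the growth estimates in $({\bf P_{5}})$-$({\bf P_{7}})$, and Cauchy-Schwarz, obtaining
\begin{equation*}
\|\eta\|_{c}\le M\Upsilon_{h}(\|\theta\|_{c})+\Theta_{\mathcal{A}}T^{1/2}\bigl(\Theta_{\|g\|}\|\rho_{\mathcal{B}}\|_{L^{2}}\Upsilon_{\mathcal{B}}(\|\theta\|_{c})+\|\rho_{f}\|_{L^{2}}(1+\|\theta\|_{c})\bigr);
\end{equation*}
dividing by $\|\theta\|_{c}=k$ and invoking (\ref{4.3}) produces an $R>0$ with $\mathcal{N}(B_{R})\subset B_{R}$. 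Compactness of $\mathbb{E}_{\alpha}(\xi^{\alpha}\mathcal{A})$ for $\xi>0$ (which via the subordination representations written in Section \ref{Section2} also yields compactness of $\mathbb{E}_{\alpha,\alpha}((\xi-s)^{\alpha}\mathcal{A})$ for $\xi>s$) gives pointwise relative compactness of $\mathcal{N}(B_{R})(\xi)$, while the integrable singularity $(\xi-s)^{\alpha-1}$ combined with the uniform $L^{2}$ envelopes yields equicontinuity of $\mathcal{N}(B_{R})$ on $[0,T]$. By Arzel\`a-Ascoli, $\mathcal{N}$ is completely continuous in the multivalued sense, hence automatically $\chi_{T}$-condensing in the sense of Definition \ref{Definition2.6}.

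The remaining point is the closed-graph property of $\mathcal{N}$, which combined with compact values upgrades to upper semicontinuity and closes the loop for Theorem \ref{Theorem2.8}. Given $\theta_{n}\to\theta$ in $C([0,T],\Omega_{2})$ and $\eta_{n}\in\mathcal{N}(\theta_{n})$ with $\eta_{n}\to\eta$, choose witnesses $u_{n}\in P_{U}(\theta_{n})$; Theorem \ref{Theorem3.4}(iii) supplies a uniform $L^{\infty}$-bound so that, up to subsequence, $u_{n}\weak u$ in $L^{2}([0,T],\Omega_{1})$, and Lemma \ref{Lemma4.1} forces $u\in P_{U}(\theta)$. Continuity of $h$ in $({\bf P_{6}})$, the Carath\'eodory condition $({\bf P_{5}})$, the Lipschitz estimate $({\bf P_{7}})$, and dominated convergence then let one pass to the limit term-by-term in the mild formula and conclude $\eta\in\mathcal{N}(\theta)$. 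The main technical obstacle sits here: handling the bilinear term $\int_{0}^{\xi}(\xi-s)^{\alpha-1}\mathbb{E}_{\alpha,\alpha}((\xi-s)^{\alpha}\mathcal{A})\mathcal{B}(s,\theta_{n}(s))u_{n}(s)\,ds$, where only weak convergence of $u_{n}$ is available and the kernel has a singularity at $s=\xi$. The remedy is to exploit the compactness of $\mathbb{E}_{\alpha,\alpha}((\xi-s)^{\alpha}\mathcal{A})$ to transfer weak convergence into strong convergence of the convolution, splitting the integral at $s=\xi-\delta$ to handle the singular tail and letting $\delta\downarrow 0$ once the bulk term has been controlled. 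Once this is in place, Theorem \ref{Theorem2.8} supplies $\theta^{*}\in\mathcal{N}(\theta^{*})$, and the corresponding selection $u^{*}\in P_{U}(\theta^{*})$ completes the mild solution pair.
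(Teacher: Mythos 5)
Your proposal is correct and follows essentially the same route as the paper's proof: the same solution operator, verification of convex compact values via Arzel\`a--Ascoli, a closed-graph argument using the uniform bound from Theorem \ref{Theorem3.4}, weak compactness of the selections and Lemma \ref{Lemma4.1}, the observation that complete continuity makes the map $\chi_{T}$-condensing, an invariant ball extracted from (\ref{4.3}), and finally Theorem \ref{Theorem2.8}. The only differences are cosmetic: you obtain the invariant ball directly from the $\liminf$ condition rather than by the paper's contradiction argument, and you make explicit the weak-convergence issue in the bilinear term that the paper passes over silently.
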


\begin{proof} From the definition of  mild solutions to problem (\ref{principal}), let us consider the mapping $\Gamma: C([0,T],\Omega_{2})\rightarrow P\left(C\left([0,T],\Omega_{2}\right) \right)$ of (\ref{principal}) as follows
\begin{eqnarray}\label{4.4}
    \Gamma(\theta):= \bigg\{&&\mathbb{E}_{\alpha}(\xi^{\alpha} \mathcal{A}) h(\theta)+\int_{0}^{\xi} (\xi-s)^{\alpha-1} \mathbb{E}_{\alpha,\alpha}((\xi-s)^{\alpha}\mathcal{A}) \left(\mathcal{B}(s,\theta(s))l(s)+ f(s,\theta(s)) \right)ds,
    \nonumber\\
    &&\,\,\xi\in[0,T],\,l\in P_{U}(\theta)\bigg\},
\end{eqnarray}
where $P_{U}$ is defined in (\ref{4.1}). Let's prove that $\Gamma$ admits a fixed point in $C([0,T],\Omega_{2})$ and, then, obtain the existence of a solution for problem (\ref{principal}). Indeed, we have the following claims.
\vspace{1.0em}

{\bf Claim 1:} {\it $\Gamma(\theta)\in Kv\left(C([0,T],\Omega_{2}) \right)$ for each $\theta\in C([0,T],\Omega_{2})$.}

Obviously, $\Gamma(\theta)$ has convex values for each $\theta\in C([0,T],\Omega_{2})$ by means of the convexity of $P_{U}(\theta)$. If remains to demonstrate the compactness of $\Gamma(\theta)$ for $\theta\in C([0,T],\Omega_{2})$. For any $y\in \Gamma(\theta)$, we can find $l\in P_{U}(\theta)$ such that
\begin{equation*}
    y(\xi)=\mathbb{E}_{\alpha} (\xi^{\alpha}\mathcal{A}) h(\theta)+\int_{0}^{\xi}(\xi-s)^{\alpha-1}\mathbb{E}_{\alpha,\alpha}(\mathcal{A}(\xi-s)^{\alpha}) (\mathcal{B}(s,\theta(s))l(s)+f(s,\theta(s)))ds
\end{equation*}
for all $\xi\in[0,T]$. From H\"older inequality, it yields
\begin{eqnarray*}
    &&\quad\left\|y(\xi) \right\|_{\Omega_{2}}\\
    &&=\left\|\mathbb{E}_{\alpha} (\xi^{\alpha}\mathcal{A}) h(\theta)+ \int_{0}^{\xi} (\xi-s)^{\alpha-1}\mathbb{E}_{\alpha,\alpha}(\mathcal{A}(\xi-s)^{\alpha}) (\mathcal{B}(s,\theta(s))l(s)+f(s,\theta(s)))\right\|_{\Omega_{2}}\notag\\
    &&\leq\int_{0}^{\xi} \left\|(\xi-s)^{\alpha-1}\mathbb{E}_{\alpha,\alpha} (\mathcal{A}(\xi-s)^{\alpha}) \right\|_{\Omega_{2}} \left\|\mathcal{B}(s,\theta(s))l(s)+f(s,\theta(s)) \right\|_{\Omega_{2}}ds
    \notag\\
    &&\quad+\left\|\mathbb{E}_{\alpha} (\xi^{\alpha} \mathcal{A}) h(\theta) \right\|_{\Omega_{2}}\\
    &&\leq\int_{0}^{\xi} \left\|(\xi-s)^{\alpha-1}\mathbb{E}_{\alpha,\alpha} (\mathcal{A}(\xi-s)^{\alpha}) \right\|_{\Omega_{2}} \left(\left\|\mathcal{B}(s,\theta(s))l(s)\right\|_{\Omega_{2}}+\left\|f(s,\theta(s)) \right\|_{\Omega_{2}}\right)ds\\
    &&\quad+\left\|\mathbb{E}_{\alpha} (\xi^{\alpha} \mathcal{A}) h(\theta) \right\|_{\Omega_{2}}
    \notag\\
    &&\leq \sup_{\xi\in [0,T]}\left\|\mathbb{E}_{\alpha} (\mathcal{A}\xi^{\alpha}) \right\|_{\Omega_{2}}\int_{0}^{\xi} \left(\left\|\mathcal{B}(s,\theta(s))l(s)\right\|_{\Omega_{2}}+\left\|f(s,\theta(s)) \right\|_{\Omega_{2}}\right)ds\notag\\
    &&\quad+\sup_{\xi\in[0,T]}\left\|\mathbb{E}_{\alpha} (\xi^{\alpha} \mathcal{A}) \right\|_{\Omega_{2}} ||h(\theta)||\\
    &&= \Theta_{\mathcal{A}} \Upsilon(||\theta||_{c})+ \Theta_{\mathcal{A}} \int_{0}^{\xi} \left(\left\|\mathcal{B}(s,\theta(s))l(s)\right\|_{\Omega_{2}}+\left\|f(s,\theta(s)) \right\|_{\Omega_{2}}\right)ds\notag\\
    &&\leq \Theta_{\mathcal{A}} \Upsilon(||\theta||_{c})+ \Theta_{\mathcal{A}} \int_{0}^{\xi} \left(\Theta_{||g||} \rho_\mathcal{B}(s)\Upsilon_\mathcal{B}(||\theta(s)||_{\Omega_{2}})+\rho_{f}(s) (1+||\theta(s)||_{\Omega_{2}}) \right)ds
    \notag\\
    &&\leq \Theta_{\mathcal{A}} \Upsilon(||\theta||_{c})+ \Theta_{\mathcal{A}} \int_{0}^{\xi} \left(\Theta_{||g||} \rho_\mathcal{B}(s)\Upsilon_\mathcal{B}(||\theta||_{c})+\rho_{f}(s) (1+||\theta||_{c}) \right)ds
    \notag\\
    &&\leq \Theta_{\mathcal{A}} \Upsilon(||\theta||_{c})+ \Theta_{\mathcal{A}}  \Theta_{||g||} ||\rho_\mathcal{B}||_{L^{2}}\Upsilon_\mathcal{B}(||\theta||_{c}) T^{1/2}+\Theta_{\mathcal{A}}||\rho_{f}||_{L^{2}} (1+||\theta||_{c}) T^{1/2} .
\end{eqnarray*}
So, for $\theta\in C([0,T],\Omega_{2})$, the mapping $\Gamma(\theta)$ is bounded in $ C([0,T],\Omega_{2})$. On the other hand, for $\theta\in C([0,T],\Omega_{2})$, let's prove that $\Gamma(\theta)$ is a familiar of equicontinuous functions. To this end, let us distinguish the following two cases. Let  $\varepsilon_{0}$  be small enough.

    {\bf Case 1:} Assume that $\xi_{1}=0$ and $0<\xi_{2}\leq \varepsilon_{0}$.  Using Holder inequality and the compactness of $\mathbb{E_{\alpha}}(\mathcal{A} \xi^{\alpha})$, one has
\begin{eqnarray}
&&\quad\left\| y(\xi_{2})-y(\xi_1)\right\|_{\Omega_{2}}
\\&=& \left\|\mathbb{E}_{\alpha}(\mathcal{A}\xi_{2}^{\alpha})h(\theta)+ \int_{0}^{\xi_{2}} (\xi_{2}-s)^{\alpha-1}\mathbb{E}_{\alpha,\alpha}(\mathcal{A}(\xi_{2}-s)^{\alpha}) (\mathcal{B}(s,\theta(s))l(s)+f(s,\theta(s)))ds-h(\theta)\right\|_{\Omega_{2}}\notag\\
&&\leq \left\|(\mathbb{E} (\mathcal{A}\xi_{2}^{\alpha})-I)h(\theta) \right\|_{\Omega_{2}}+\int_{0}^{\xi_2} (\xi_{2}-s)^{\alpha-1}\mathbb{E}_{\alpha,\alpha}(\mathcal{A}(\xi_{2}-s)^{\alpha}) \left\| \mathcal{B}(s,\theta(s))l(s)+f(s,\theta(s))\right\|_{\Omega_{2}}ds\notag\\
&&\leq\left\|(\mathbb{E}_{\alpha} (\mathcal{A}\xi_{2}^{\alpha})- I) h(\theta)\right\|_{\Omega_{2}} + \Theta_{\mathcal{A}} \int_{0}^{\xi_{2}} \left(\Theta_{||g||} \rho_\mathcal{B}(s) \Upsilon_\mathcal{B}(||\theta(s)||_{\Omega_{2}})+\rho_{f(s)} (1+||\theta(s)||_{\Omega_{2}})   \right)ds\notag\\
&&\leq \left\|(\mathbb{E}_{\alpha} (\mathcal{A} \xi_{2}^{\alpha})-I))h(\theta) \right\|_{c}+ \Theta_{\mathcal{A}} \int^{\xi_2}_{0} \Theta_{||g||}\rho_\mathcal{B}(s)\Upsilon(||\theta(s)||_{c}) ds+\Theta_{\mathcal{A}}\int^{\xi_2}_{0}\rho_{f(s)} (1+||\theta(s)||_{c})ds\notag\\
&&\leq \left\|(\mathbb{E}_{\alpha} (\mathcal{A} \xi_{2}^{\alpha})-I))h(\theta) \right\|_{c}+ \Theta_{\mathcal{A}} \xi_{2}^{1/2} \Theta_{||g||}||\rho_\mathcal{B}||_{L^2}\Upsilon(||\theta||_{c}) +\Theta_{\mathcal{A}}\xi_{2}^{1/2}||\rho_{f}||_{L^2} (1+||\theta||_{c})\notag\\
&&\leq \left\|(\mathbb{E}_{\alpha} (\mathcal{A} \xi_{2}^{\alpha})-I)) \right\|_{c} \Upsilon_{h}\left(\left\|\theta \right\|_{c}\right) + \Theta_{\mathcal{A}} \xi_{2}^{1/2} \Theta_{||g||}||\rho_\mathcal{B}||_{L^2}\Upsilon(||\theta||_{c}) +\Theta_{\mathcal{A}}\xi_{2}^{1/2}||\rho_{f}||_{L^2} (1+||\theta||_{c}).\notag
\end{eqnarray}
This implies that $\left\| y(\xi_{2})-y(\xi_1)\right\|_{\Omega_{2}}\to 0$
 as $0<\xi_{2}\leq \varepsilon_{0}\rightarrow 0$.
\vspace{1.0em}

{\bf Case 2:} If $\frac{\varepsilon_0}{2}\leq \xi_{1} \leq \xi_{2}\leq T$, then we have
\begin{eqnarray}
&&\quad\left\| y(\xi_{2})-y(\xi_1)\right\|_{\Omega_{2}}\\
&&\leq  \left\|\mathbb{E}_{\alpha} (\xi_{2}^{\alpha} \mathcal{A})- \mathbb{E}_{\alpha} (\xi_{1}^{\alpha} \mathcal{A})\right\|_{\Omega_{2}} |||h(\theta)||_{\Omega_{2}}
+\left\| \int_{0}^{\xi_2} (\xi_{2}-s)^{\alpha-1}\mathbb{E}_{\alpha,\alpha}(\mathcal{A}(\xi_{2}-s)^{\alpha}) \left(\mathcal{B}(s,\theta(s))l(s)+f(s,\theta(s))\right)ds\right.\notag\\
&&\quad\left.-\int_{0}^{\xi_1} (\xi_{1}-s)^{\alpha-1}\mathbb{E}_{\alpha,\alpha}(\mathcal{A}(\xi_{1}-s)^{\alpha})  \left(\mathcal{B}(s,\theta(s))l(s)+f(s,\theta(s))\right)ds\right\|_{\Omega_{2}}\notag\\
 &&\leq
\left\| \int_{\xi_1}^{\xi_2} (\xi_{2}-s)^{\alpha-1}\mathbb{E}_{\alpha,\alpha}(\mathcal{A}(\xi_{2}-s)^{\alpha}) \left(\mathcal{B}(s,\theta(s))l(s)+f(s,\theta(s))\right)ds\right\|_{\Omega_{2}}+\left\|\mathbb{E}_{\alpha} (\xi_{2}^{\alpha} \mathcal{A})- \mathbb{E}_{\alpha} (\xi_{1}^{\alpha} \mathcal{A})\right\|_{\Omega_{2}}\Upsilon_{h}(||\theta||_c) \notag\\
&&\quad +\left\| \int_{0}^{\xi_1} \left( (\xi_{2}-s)^{\alpha-1}\mathbb{E}_{\alpha,\alpha}(\mathcal{A}(\xi_{2}-s)^{\alpha})-(\xi_{1}-s)^{\alpha-1}\mathbb{E}_{\alpha}(\mathcal{A}(\xi_{1}-s)^{\alpha})\right) \left(\mathcal{B}(s,\theta(s))l(s)+f(s,\theta(s))\right)ds\right\|_{\Omega_{2}}\notag\\
&&= :{\bf Q_{1}}+{\bf Q_{2}}+{\bf Q_{3}}.\notag
\end{eqnarray}
Let's analyze ${\bf Q_{1}},{\bf Q_{2}}$ and ${\bf Q_{3}}$. Using the conditions $({\bf P_{5}})-({\bf P_{7}})$ and  H\"older inequality, one has
\begin{eqnarray}\label{4.5}
&&\quad{\bf Q_{1}}\notag\\
&&= \left\| \int_{\xi_1}^{\xi_2}(\xi_{2}-s)^{\alpha-1}\mathbb{E}_{\alpha,\alpha}(\mathcal{A}(\xi_{2}-s)^{\alpha}) \left(\mathcal{B}(s,\theta(s))l(s)+f(s,\theta(s))\right)ds\right\|_{\Omega_{2}}\notag\\
&&\leq \Theta_{\mathcal{A}} \int_{\xi_1}^{\xi_2} \left(\left\|  \left(\mathcal{B}(s,\theta(s))l(s)\right\|+\left\|f(s,\theta(s))\right\|_{\Omega_{2}}\right)\right)ds\notag\\
&&\leq \Theta_{\mathcal{A}} \int_{\xi_1}^{\xi_{2}} \left( \Theta_{||g||}\rho_\mathcal{B}(s)\Upsilon_\mathcal{B}(||\theta(s)||_{\Omega_{2}})+\rho_{f}(s) (1+||\theta(s)||_{\Omega_{2}}) \right)ds\notag\\
&&\leq \Theta_{\mathcal{A}} (\xi_{2}-\xi_{1})^{1/2} \left(\Theta_{||g||}\Upsilon(||\theta||_{c})||\rho_\mathcal{B}||_{L^{2}}+||\rho_f||_{L^2}(1+||\theta||_c)      \right) \rightarrow 0
\end{eqnarray}
as $|\xi_{2}-\xi_{1}|\rightarrow 0$.

The continuity and compactness of  $\mathbb{E}_{\alpha}(\mathcal{A}\xi^{\alpha})$ implies that for $\varepsilon_{0}/2\leq \xi_{1}<\xi_{2} \leq T$, it holds
\begin{eqnarray}\label{4.6}
    {\bf Q_{2}} = \left\|\mathbb{E}_{\alpha}(\mathcal{A} \xi_{2}^{\alpha})-\mathbb{E}_{\alpha}(\mathcal{A} \xi_{1}^{\alpha}) \right\| \left\| h(\theta)\right\| _{\Omega_{2}} \leq \left\|\mathbb{E}_{\alpha} (\mathcal{A} \xi_{2}^{\alpha}) - \mathbb{E}_{\alpha}(\mathcal{A} \xi_{1}^{\alpha}) \right\| \Upsilon_{h} (||\theta||_{c})\rightarrow 0
\end{eqnarray}
as $|\xi_{2}-\xi_{1}|\rightarrow 0$.

On the other hand, for $\delta>0$, it yields
\begin{eqnarray}\label{4.7}
    &&\quad{\bf Q_{3}}\notag\\
    &&\leq \left\|\int_{\xi_{1}-\delta}^{\xi_{1}}\left((\xi_{2}-s)^{\alpha-1}\mathbb{E}_{\alpha,\alpha}(\mathcal{A}(\xi_{2}-s)^{\alpha})-(\xi_{1}-s)^{\alpha-1}\mathbb{E}_{\alpha,\alpha}(\mathcal{A}(\xi_{1}-s)^{\alpha})  \right) (\mathcal{B}(s,\theta(s))l(s)+f(s,\theta(s)))ds\right\|_{\Omega_{2}}\notag\\
    &&\quad+ \left\|\int_{0}^{\xi_{1}-\delta}\left(\xi_{2}-s)^{\alpha-1}(\mathbb{E}_{\alpha,\alpha}(\mathcal{A}(\xi_{2}-s)^{\alpha})-(\xi_{1}-s)^{\alpha-1}\mathbb{E}_{\alpha,\alpha}(\mathcal{A}(\xi_{1}-s)^{\alpha})  \right) \left(\mathcal{B}(s,\theta(s))l(s)+f(s,\theta(s))\right)ds\right\|_{\Omega_{2}}\notag\\
    &&\leq  2\Theta_{\mathcal{A}} \int^{\xi_{1}}_{\xi_{1}-\delta}\left(\left\|\mathcal{B}(s,\theta(s))l(s)\right\|_{\Omega_{2}}+ \left\|f(s,\theta(s))\right\|_{\Omega_{2}}\right)ds\notag\\
    &&\quad+ \underset{s\in [0,\xi_{1}-\delta]}{\sup} \left\| (\xi_{2}-s)^{\alpha-1}\left( \mathbb{E}_{\alpha,\alpha} (\mathcal{A} (\xi_{2}-s)^{\alpha})- \mathbb{E}_{\alpha,\alpha} (\mathcal{A} (\xi_{1}-s)^{\alpha})\right)\right\|\notag\\
    &&\quad\times \int_{0}^{\xi_{1}-\delta} \left(\left\|\mathcal{B}(s,\theta(s))l(s) \right\|_{\Omega_{2}}+ \left\|f(s,\theta(s)) \right\|_{\Omega_{2}} \right)ds\\
    &&\leq 2 \delta^{1/2} \Theta_{\mathcal{A}} \left(\Theta_{||g||} \Upsilon_\mathcal{B}(||\theta||_c) ||\rho_\mathcal{B}||_{L^{2}}+||\rho_{f}||_{L^{2}} \left( 1+||\theta||_{c}\right) \right)\notag\\
    &&\quad+\underset{s\in [0,\xi_{1}-\delta]}{\sup} \left\|(\xi_{2}-s)^{\alpha-1}\left( \mathbb{E}_{\alpha,\alpha} (\mathcal{A} (\xi_{2}-s)^{\alpha})- \mathbb{E}_{\alpha,\alpha} (\mathcal{A} (\xi_{1}-s)^{\alpha})\right) \right\| (\xi_{1}-\delta)^{1/2}\notag\\ &&\quad\times\left(\Theta_{||g||} \Upsilon_\mathcal{B} (||\theta||_{c}) ||\rho_\mathcal{B}||_{L^{2}}+||\rho_{f}||_{L^{2}}   (1+||\theta||_{c})\right)\rightarrow 0\notag
\end{eqnarray}
as $|\xi_{2}-\xi_{1}|\rightarrow 0$ and $\delta\rightarrow 0$.

From (\ref{4.5})-(\ref{4.7}), it follows that
\begin{eqnarray*}
    \left\|y(\xi_2)-y(\xi_1) \right\|_{\Omega_{2}}\rightarrow 0,\,\, as\,\, |\xi_{2}-\xi_{1}|\rightarrow 0.
\end{eqnarray*}

So, the set $\Gamma(\theta)$ is equicontinuous for every $\theta\in C([0,T],\Omega_{2})$. Using the Arzela-Ascoli theorem \cite{27}, we have that $\Gamma(\theta)$ is relatively compact for each $\theta\in C([0,T],\Omega_{2})$. Now let's verify that $\Gamma(\theta)$ is closed in $C([0,T],\Omega_{2})$ for each $\theta\in C([0,T],\Omega_{2})$. Consider $\left\{y_n\right\}\subset \Gamma(\theta)$ a sequence such that $y_{n}\rightarrow y^{*}$ in $C([0,T],\Omega_{2})$ as $n\rightarrow \infty$. So, there exists a sequence $\left\{ l_{n}\right\}\subset P_{U}(\theta)$ such that
\begin{eqnarray*}
    y_{n}(\xi)=\mathbb{E}_{\alpha}(\mathcal{A} \xi^{\alpha}) h(\theta)+\int_{0}^{\xi}(\xi-s)^{\alpha-1} \mathbb{E}_{\alpha,\alpha}(\mathcal{A}(\xi-s)^{\alpha}) \left( \mathcal{B}(s,\theta(s))l_{n}(s)+f(s,\theta(s))\right)ds,\,\, \xi\in [0,T].
\end{eqnarray*}
Using {\bf Theorem \ref{Theorem3.4}} (3), it follows that  sequence $\left\{l_{n} \right\}\subset L^{2}([0,T],\Omega_{2})$ is weakly relatively compact. Without loss of generality, we may assume $l_{n}\rightharpoonup l^{*}\in P_{U}(\theta)$ in $L^{2}([0,T],\Omega_{1})$. However, by Mazur's lemma and the the upper semi-continuity of $P_{U}$, it is not difficult to see that $l^{*}\in P_{U}(\theta)$. On the other hand, the compactness of $\mathbb{E}_{\alpha}(\mathcal{A} \xi^{\alpha})$ for $\xi>0$ reveals that
\begin{eqnarray*}
    y^{*}(\xi)= \mathbb{E}_{\alpha}(\mathcal{A} \xi^{\alpha})h(\theta)+\int_{0}^{\xi} \mathbb{E}_{\alpha}(\mathcal{A} (\xi-s)^{\alpha}) (\mathcal{B}(s,\theta(s))l^{*}(s)+f(s,\theta(s)))ds,\,\,\forall \xi\in [0,T].
\end{eqnarray*}
This means $y^{*}\in \Gamma(\theta)$, i.e., $\Gamma(\theta)\in Kv(C([0,T],\Omega_{2}))$.
\vspace{1.0em}

{\bf Claim 2.} {\it $\Gamma$ is a closed mapping.}
\vspace{1.0em}

Let $\theta_{n}\rightarrow \theta^{*}$ and $y_{n}\rightarrow y^{*}$ in $C([0,T], \Omega_{2})$ with $y_{n}\in \Gamma(\theta_{n})$ for each $n\in\mathbb{N}$. Now let's prove that $y^{*}\in \Gamma(\theta)$. Using the definition of $\Gamma$, we can choose $l_{n}\in P_{U}(\theta_{n})$ with $n\in\mathbb{N}$ such that
\begin{eqnarray}\label{4.8}
    y_{n}(\xi)= \mathbb{E}_{\alpha} (\mathcal{A} \xi^{\alpha}) h(\theta_{n})+\int^{\xi}_{0} (\xi-s)^{\alpha-1}\mathbb{E}_{\alpha,\alpha}(\mathcal{A} (\xi-s)^{\alpha}) \left( \mathcal{B}(s,\theta_{n}(s))l_{n}(s)+f(s,\theta_{n}(s)\right)ds
\end{eqnarray}
for all $\xi\in [0,T]$. Without loss of generality, using {\bf Theorem \ref{Theorem3.4}} and {\bf Lemma \ref{Lemma4.1}}, we may assume that $l_{n}\rightarrow l^{*}\in P_{U}(\theta^{*})$. In addition, using the condition $({\bf P_{7}})$, we have that $f(\cdot,\theta_{n}(\cdot))\rightarrow f(\cdot,\theta^{*}(\cdot))$ in $L^{2}([0,T],\Omega_{2})$. Using the continuity of $h$ and $\mathcal{B}(\xi,\cdot)$ and the compactness of $\mathbb{E}_{\alpha}(\mathcal{A} \xi^{\alpha})$ for $\xi>0$ in (\ref{4.8}), one has
\begin{eqnarray*}
    y^{*}(\xi)= \mathbb{E}_{\alpha}(\mathcal{A}\xi^{\alpha}) h(\theta^{*}) + \int_{0}^{\xi} (\xi-s)^{\alpha-1}\mathbb{E}_{\alpha,\alpha}((\xi-s)^{\alpha} \mathcal{A})\left(\mathcal{B}(s,\theta^{*}(s))l^{*}(s)+ f(s,\theta^{*}(s)) \right)ds,\,\, \xi\in [0,T]
\end{eqnarray*}
and $l^{*}\in P_{U}(\theta^{*})$. Therefore, we have $y^{*}\in \Gamma (\theta^{*})$.
\vspace{1.0em}

{\bf Claim 3.} {\it $\Gamma$ is $\chi_{T}$-condensing.}
\vspace{1.0em}

It suffices to prove that $\chi_{D}\nleq \chi_{T}(\Gamma(D))$ for each $D\in b(C([0,T],\Omega_{2}))$ being not relatively compact in $C([0,T],\Omega_{2})$. Since $D\subset C([0,T],\Omega_{2})$ is a bounded set, $\Gamma(D)$ is relatively compact, i.e., $\chi_{T}(\Gamma(D))=0$. So, $\chi_{T}(D)\leq \chi_{T}(\Gamma(D))=0$ implies that $D$ is relatively compact. Finally, using the regularity of $\chi_{T}$ we have that $\Gamma$ is $\chi_{T}$-condensing.

\vspace{1.5em}

{\bf Claim 4.} {\it There exists a constant $\Theta_{R}>0$ such that}
\begin{eqnarray*}
    \Gamma(\overline{\mathcal{B}}_{\Theta_R})\subset \overline{\mathcal{B}}_{\Theta_R}:=\left\{\theta\in C([0,T],\Omega_{2}); ||\theta||_{c}\leq \Theta_{R}\subset C([0,T];\Omega_{2})\right\}.
\end{eqnarray*}

The proof will be discussed by contradiction. In this sense, we assume that there are two subsequences
$\left\{\theta_{k}\right\}$ and $\left\{y_{k}\right\}$   such that  $\left\|\theta_k \right\|_{C([0,T],\Omega_{2})}\leq k$, $y_{k}\in \Gamma(\theta_k)$ and $\left\|y_k \right\|_{C([0,T],\Omega_{2})}>k$. So, there exists $l_k\in P_{U}(\theta_k)$ such that
\begin{eqnarray*}
    y_{k}(\xi) = \mathbb{E}_{\alpha} (\mathcal{A} \xi^{\alpha}) h(\theta_k)+\int_{0}^{\xi}(\xi-s)^{\alpha-1} \mathbb{E}_{\alpha,\alpha}(\mathcal{A}(\xi-s)^{\alpha})\left(\mathcal{B}(s,\theta(s))l_{k}(s)+f(s,x_{k}(s)) \right)ds,\,\,\xi\in [0,T].
\end{eqnarray*}
Then, for every $\xi\in [0,T]$, one has
\begin{eqnarray*}
    &&\quad\left\|y_{k} \right\|_{\Omega_{2}}\\
    &&\leq \left\|\mathbb{E}_{\alpha} (\mathcal{A} \xi^{\alpha}) h(\theta_k) \right\|_{\Omega_{2}}+ \left\|\int_{0}^{\xi} (\xi-s)^{\alpha-1}\mathbb{E}_{\alpha} (\mathcal{A} (\xi-s)^{\alpha}) \left(\mathcal{B}(s,\theta(s))l_{k}(s)+f(s,x_{k}(s)) \right) ds\right\|_{\Omega_{2}}\notag\\
    &&\leq\Theta_{\mathcal{A}}\int_{0}^{\xi} \left[\Theta_{||g||} \rho_\mathcal{B}(s) \Upsilon_\mathcal{B}(||\theta_k(s)||_{\Omega_{2}}) +\rho_{f}(s) (1+||\theta_{k}(s)||_{\Omega_{2}}) \right]ds+ \Theta_{\mathcal{A}} \Upsilon_{h}(k)\notag\\
    &&\leq \Theta_{\mathcal{A}} \left[\int_{0}^{\xi} \left(\Theta_{||g||} \Upsilon_\mathcal{B}(s) \Upsilon_\mathcal{B}(k)+\rho_{f}(s)(1+k) \right) ds  \right]+\Theta_{\mathcal{A}}\Upsilon_{h}(k)\notag\\
    &&\leq \Theta_{\mathcal{A}} T^{1/2} \left(\Theta_{||g||} \Upsilon_\mathcal{B}(k) ||\rho_B||_{L^2}+ (1+k)||\rho_{f}||_{L^2} \right)+\Theta_{\mathcal{A}} \Upsilon_{h}(k),
\end{eqnarray*}
which implies by (\ref{4.3}) that
\begin{eqnarray*}
    &&\,1\leq \underset{k\rightarrow\infty}{\lim\inf} \frac{||y_k||_{C([0,T],\Omega_{2})}}{k}\notag\\
    &&\quad\leq \underset{k\rightarrow\infty}{\lim\inf} \left(\Theta_{\mathcal{A}} T^{1/2} \Theta_{||g||} ||\rho_\mathcal{B}||_{L^2} \frac{\Upsilon_\mathcal{B}(k)}{k} +\Theta_{\mathcal{A}} \frac{\Upsilon_{h}(k)}{k}+\Theta_{\mathcal{A}}T^{1/2} ||\rho_{f}||_{L^2}\right)\notag\\
    &&\quad<1.
\end{eqnarray*}
This gets a contradiction. Therefore, there exists $\Theta_{R}$ such that {\bf Claim} holds.  All conditions of {\bf Theorem \ref{Theorem2.8}} are verified. Therefore, this implies $Fix\,\,\Gamma \neq \emptyset$ in $\overline{\mathcal{B}}_{\Theta_R}$, namely, fractional differential variational inequality  (\ref{principal}) has at least one mild solution $(\theta,u)$.
\end{proof}

\begin{theorem} Under the conditions $({\bf P_{1}})-({\bf P_{7}})$, if $\mathbb{E}_{\alpha}(\mathcal{A} \xi^{\alpha})$ is compact for $\xi>0$ and
\begin{eqnarray}\label{4.9}
    \underset{k\rightarrow\infty}{\lim} \frac{\Upsilon_{h}(k)}{k}=0,\,\,and\,\,\Upsilon_\mathcal{B}(k)=(1+k)
\end{eqnarray}
then problem {\rm(\ref{principal})} has at least one mild solution $(\theta,u)$.
\end{theorem}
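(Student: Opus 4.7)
My plan is to keep the multi-valued operator $\Gamma$ from the proof of Theorem~\ref{Theorem4.2} and reuse Claims~1--3 essentially unchanged; the only real work is to replace Claim~4, whose self-mapping estimate breaks down under (\ref{4.9}) because the linear growth $\Upsilon_{\mathcal{B}}(k)=1+k$ no longer leaves room for the smallness hypothesis~(\ref{4.3}). My idea is to compose $\Gamma$ with a radial retraction onto a large ball and then close the argument via the classical Gronwall inequality; the condition $\lim_{k\to\infty}\Upsilon_{h}(k)/k=0$ will ultimately let me choose the retraction radius large enough.

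Concretely, for $R_{0}>0$ to be chosen, I introduce the continuous radial retraction $r_{R_{0}}: C([0,T],\Omega_{2})\to \overline{\mathcal{B}}_{R_{0}}$ defined by $r_{R_{0}}(\theta)=\theta$ if $\|\theta\|_{c}\le R_{0}$ and $r_{R_{0}}(\theta)=R_{0}\theta/\|\theta\|_{c}$ otherwise, and I set $\Gamma_{R_{0}}:=\Gamma\circ r_{R_{0}}$. Since $r_{R_{0}}$ is continuous with image in the bounded set $\overline{\mathcal{B}}_{R_{0}}$, $\Gamma_{R_{0}}$ inherits the compact convex values, the closed-graph property, and the $\chi_{T}$-condensing property from Claims~1--3 of Theorem~\ref{Theorem4.2}; in fact $\Gamma_{R_{0}}$ is even completely continuous, because $\Gamma(\overline{\mathcal{B}}_{R_{0}})$ is already relatively compact by the argument of Claim~1. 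In particular the range of $\Gamma_{R_{0}}$ is uniformly contained in some ball $\overline{\mathcal{B}}_{M(R_{0})}$, so Theorem~\ref{Theorem2.8} applies on $\overline{\mathcal{B}}_{M(R_{0})}$ and yields a fixed point $\theta^{*}\in\Gamma_{R_{0}}(\theta^{*})$.

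It remains to show that $\|\theta^{*}\|_{c}\le R_{0}$, since then $r_{R_{0}}(\theta^{*})=\theta^{*}$ and $\theta^{*}$ is in fact a fixed point of $\Gamma$, i.e.\ a mild solution of (\ref{principal}). By construction there exists $l\in P_{U}(r_{R_{0}}(\theta^{*}))$ such that $\theta^{*}$ is given by the formula (\ref{1.3}) with $\theta$ replaced by $r_{R_{0}}(\theta^{*})$ in the nonlinear terms. Bounding the $h$-term by $\Upsilon_{h}(R_{0})$, using $\Upsilon_{\mathcal{B}}(k)=1+k$ together with $({\bf P_{5}})$ and $({\bf P_{7}})$, and --- crucially --- applying the pointwise inequality $\|r_{R_{0}}(\theta^{*})(s)\|_{\Omega_{2}}\le \|\theta^{*}(s)\|_{\Omega_{2}}$ inside the integral, I obtain
\[
\|\theta^{*}(\xi)\|_{\Omega_{2}}\le \Theta_{\mathcal{A}}\Upsilon_{h}(R_{0})+\Theta_{\mathcal{A}}\int_{0}^{\xi}\bigl[\Theta_{||g||}\rho_{\mathcal{B}}(s)+\rho_{f}(s)\bigr]\bigl(1+\|\theta^{*}(s)\|_{\Omega_{2}}\bigr)\,ds.
\]
Writing $C_{0}:=\Theta_{\mathcal{A}}\bigl(\Theta_{||g||}\|\rho_{\mathcal{B}}\|_{L^{1}(0,T)}+\|\rho_{f}\|_{L^{1}(0,T)}\bigr)$, Gronwall's inequality gives $\|\theta^{*}\|_{c}\le (\Theta_{\mathcal{A}}\Upsilon_{h}(R_{0})+C_{0})e^{C_{0}}$. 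By (\ref{4.9}) this upper bound divided by $R_{0}$ tends to $0$ as $R_{0}\to\infty$, so I fix $R_{0}$ large enough that $(\Theta_{\mathcal{A}}\Upsilon_{h}(R_{0})+C_{0})e^{C_{0}}\le R_{0}$, and the proof concludes.

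The main subtlety I anticipate is keeping $\|\theta^{*}(s)\|_{\Omega_{2}}$ (and not $R_{0}$) inside the integrand of the estimate above: replacing it by the uniform bound $R_{0}$ would reintroduce a factor $(1+R_{0})$ in front of the $L^{1}$-norms of $\rho_{\mathcal{B}}$ and $\rho_{f}$, reproducing the exact failure seen in Claim~4 of Theorem~\ref{Theorem4.2}. A secondary but routine check is that $\Gamma_{R_{0}}$ is $\chi_{T}$-condensing, which is immediate because $r_{R_{0}}(D)\subset\overline{\mathcal{B}}_{R_{0}}$ is bounded for any $D$, and the compactness of $\mathbb{E}_{\alpha}(\mathcal{A}\xi^{\alpha})$ then forces $\Gamma(r_{R_{0}}(D))$ to be relatively compact in $C([0,T],\Omega_{2})$.
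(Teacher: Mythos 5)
Your proposal is correct, but it follows a genuinely different route from the paper's. The paper keeps the operator $\Gamma$ untouched and re-proves Claim~4 of Theorem~\ref{Theorem4.2} by contradiction in a Bielecki-type equivalent norm $\|\theta\|_{*}=\max_{\xi}\mathbb{E}_{\alpha}(-L\xi^{\alpha})\|\theta(\xi)\|_{\Omega_{2}}$, choosing $L$ via (\ref{4.10}) so that the convolution term has weighted norm less than one; the self-mapping ball for $\Gamma$ is then produced directly and the rest of Theorem~\ref{Theorem4.2} is invoked verbatim. You instead modify the operator --- composing with the radial retraction $r_{R_{0}}$ so that a fixed point of $\Gamma_{R_{0}}=\Gamma\circ r_{R_{0}}$ exists unconditionally --- and only afterwards show, via the classical $L^{1}$-kernel Gronwall inequality and the pointwise bound $\|r_{R_{0}}(\theta^{*})(s)\|_{\Omega_{2}}\le\|\theta^{*}(s)\|_{\Omega_{2}}$, that the fixed point lies inside the retraction ball once $R_{0}$ is large, which is exactly where $\lim_{k\to\infty}\Upsilon_{h}(k)/k=0$ enters. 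The two mechanisms are morally equivalent (a Bielecki norm is the functional-analytic packaging of Gronwall), but your version has a concrete advantage: it avoids the step in (\ref{4.11}) where the weight $\mathbb{E}_{\alpha}(-L\xi^{\alpha})$ is pushed inside the convolution against $\mathbb{E}_{\alpha,\alpha}(-L(\xi-s)^{\alpha})$, an inequality that Mittag-Leffler functions do not satisfy as a genuine semigroup property and which the paper leaves unjustified; your Gronwall step is elementary and airtight. The price is a small amount of extra bookkeeping (closedness and condensing property of $\Gamma_{R_{0}}$, which you correctly reduce to continuity of $r_{R_{0}}$ and relative compactness of $\Gamma(\overline{\mathcal{B}}_{R_{0}})$), and of course your argument inherits whatever is already assumed in Claims~1--3, exactly as the paper's does. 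One point worth stating explicitly when you write this up: the bound $\|h(r_{R_{0}}(\theta^{*}))\|_{\Omega_{2}}\le\Upsilon_{h}(R_{0})$ uses that $\Upsilon_{h}$ is non-decreasing, which is part of $({\bf P_{6}})$.
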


\begin{proof}
Firstly, consider an equivalent norm in space $C([0,T];\Omega_{2})$ given by 
\begin{eqnarray*}
    ||\theta||_{*}:=\underset{\xi\in[0,T]}{\max} \mathbb{E}_{\alpha}(-L \xi^{\alpha}) ||\theta(\xi)||_{\Omega_{2}},\,\,\forall \theta\in C([0,T],\Omega_{2})
\end{eqnarray*}
where $L>0$ is  such that
\begin{eqnarray}\label{4.10}
    \Theta_{\mathcal{A}}\int_{0}^{\xi} \mathbb{E}_{\alpha} (-L (\xi-s)^{\alpha}) \left(\Theta_{||g||} \rho_\mathcal{B}(s)+\rho_{f}(s) \right)ds<1,\,\,for\,\,all\,\,\xi\in[0,T].
\end{eqnarray}

If {\bf Claim 4} is not true, suppose that for each $k>0$ there exist two sequences $\left\{x_k\right\}$ and $\left\{y_k\right\}$ with $y_{k}\in \Gamma(\theta_k)$ such that $||\theta_k||_{C([0,T],\Omega_{2})}\leq k$ and $||y_k||_{C([0,T],\Omega_{2})}> k$. So, there exists $l_{k}\in P_{U}(\theta_k)$ such that
\begin{eqnarray}\label{4.11}
    &&\quad\mathbb{E}_{\alpha}(-L\xi^{\alpha}) ||y_k(\xi)||_{\Omega_{2}}\\
    &&\leq\mathbb{E}_{\alpha}(-L \xi^{\alpha})\int_{0}^{\xi} \left\| (\xi-s)^{\alpha-1}\mathbb{E}_{\alpha,\alpha}(\mathcal{A} (\xi-s)^{\alpha})\right\| \left\|\mathcal{B}(s,x_{k}(s))l_{k}(s)+f(s,x_{k}(s)) \right\|_{\Omega_{2}}ds\notag\\
    &&\quad+\mathbb{E}_{\alpha} (-L \xi^{\alpha}) \Theta_{\mathcal{A}} \Upsilon_{h}(k)\notag\\
&&\leq \Theta_{\mathcal{A}} \mathbb{E}_{\alpha}(-L\xi^{\alpha})\Upsilon_{h}(k)+\Theta_{\mathcal{A}} \mathbb{E}_{\alpha}(-L\xi^{\alpha}) \int_{0}^{\xi} \left(\Theta_{||g||} \rho_\mathcal{B}(s)+\rho_{f}(s) \right)(1+||\theta_{k}(s)||_{\Omega_{2}})ds \notag\\
&&\leq \Theta_{\mathcal{A}}||\theta_{k}||_{*}\int^{\xi}_{0}(\xi-s)^{\alpha-1}\mathbb{E}_{\alpha,\alpha}(-L (\xi-s)^{\alpha}) (\Theta_{||g||} \rho_\mathcal{B}(s)+\rho_{f}(s)) ds\notag\\
&&\quad+\Theta_{\mathcal{A}} \left(\Upsilon_{h}(k)+\Theta_{||g||}||\rho_\mathcal{B}||_{L^1}+||\rho_f||_{L^1} \right)\notag\\
&&\leq \Theta_{\mathcal{A}}k
\int^{\xi}_{0}(\xi-s)^{\alpha-1}\mathbb{E}_{\alpha,\alpha}(-L (\xi-s)^{\alpha}) (\Theta_{||g||} \rho_\mathcal{B}(s)+\rho_{f}(s)) ds\notag\\
&&\quad+\Theta_{\mathcal{A}} \left(\Upsilon_{h}(k)+\Theta_{||g||} ||\rho_\mathcal{B}||_{L^1}+ ||\rho_f||_{L^1} \right).\notag
\end{eqnarray}
Because of $||y_{k}||_{*}>k$ for each $k>0$, one yields from (\ref{4.9})-(\ref{4.11}) that
\begin{eqnarray}
    &&\,1\leq  \underset{k\rightarrow\infty}{ \lim\sup} \frac{||y_k||_{*}}{k} \notag\\
    &&\quad\leq \underset{k\rightarrow\infty}{ \lim \sup} \frac{\Theta_{\mathcal{A}} \Upsilon_{h}(k)}{k}+\Theta_{\mathcal{A}} \int_{0}^{\xi}(\xi-s)^{\alpha-1} \mathbb{E}_{\alpha,\alpha}(-L (\xi-s)^{\alpha}) \left(\Theta_{||g||} \rho_\mathcal{B}(s)+\rho_{f}(s) \right)ds\notag\\
    &&\quad\leq  \Theta_{\mathcal{A}} \int_{0}^{\xi}\mathbb{E}_{\alpha}(-L (\xi-s)^{\alpha}) (\Theta_{||g||}\rho_\mathcal{B}(s)+\rho_{f}(s))ds\notag\\
    &&\quad<1.
\end{eqnarray}
This leads to a contradiction. Hence, the {\bf Claim 4} holds. It follows from {\bf Theorem \ref{Theorem4.2}} that problem (\ref{principal}) has at least one mild solution $(\theta,u)$.
\end{proof}

\subsection*{Data availability statement}
Data sharing not applicable to this article as no data sets were generated or analysed during the current study.

\subsection*{Conflict of interests} There is no conflict of interests.

\section*{Acknowledgment}

The authors are very grateful to the anonymous reviewers for their useful comments that led to improvement of the manuscript. This project has received funding from the  Natural Science Foundation of Guangxi Grant Nos. 2021GXNSFFA196004 and GKAD23026237, the NNSF of China Grant No. 12001478, the China Postdoctoral Science Foundation funded project No. 2022M721560,  the Startup Project of Doctor Scientific Research of Yulin Normal University No. G2023ZK13,  and the European Union's Horizon 2020 Research and Innovation Programme under the Marie Sklodowska-Curie grant agreement No. 823731 CONMECH. It is also supported by the project cooperation between Guangxi Normal University and Yulin Normal University.

\end{document}